\documentclass[a4paper]{article}
\usepackage[margin=1 in]{geometry}

\usepackage{verbatim}
\usepackage{hyperref}
\usepackage{amsmath}
\usepackage{enumerate, amsthm}
\usepackage{mathrsfs}
\usepackage{xcolor}
\usepackage{mathtools}
\usepackage{tikz}
\usetikzlibrary{patterns}
\usetikzlibrary{arrows}
\usepackage{ifthen}
\usepackage{bm}
\usepackage{amssymb}
\usepackage{centernot}

\numberwithin{equation}{section}

\theoremstyle{plain}
\newtheorem{theorem}{Theorem}[section]
\newtheorem{lemma}[theorem]{Lemma}
\newtheorem{corollary}[theorem]{Corollary}
\newtheorem{proposition}[theorem]{Proposition}

\theoremstyle{definition}

\theoremstyle{remark}
\newtheorem{remark}[theorem]{Remark}

\renewcommand{\P}{\mathbb P}
\newcommand{\E}{\mathbb E}
\newcommand{\R}{\mathbb R}
\newcommand{\Z}{\mathbb Z}
\newcommand{\N}{\mathbb N}
\renewcommand{\S}{\mathbb S}

\thispagestyle{empty}
\title{Uniqueness of unbounded component for level sets of smooth Gaussian fields}
\author{Franco Severo\footnotemark[1]\footnote{ETH Z\"{urich}, franco.severo@math.ethz.ch}}

\date{}

\begin{document}
\thispagestyle{empty}
\maketitle

\begin{abstract}
For a large family of stationary continuous Gaussian fields $f$ on $\R^d$, including the Bargmann--Fock and Cauchy fields,
we prove that there exists at most one unbounded connected component in the level set $\{f=\ell\}$ (as well as in the excursion set $\{f\geq\ell\}$) almost surely  for every level $\ell\in \R$, thus proving a conjecture proposed by Duminil-Copin, Rivera, Rodriguez \& Vanneuville.
As the fields considered are typically very rigid (e.g.~analytic almost surely), there is no sort of finite energy property available and the classical approaches to prove uniqueness become difficult to implement. We bypass this difficulty using a soft shift argument based on the Cameron--Martin theorem.
\end{abstract}

\section{Introduction}\label{sec:intro}

Let $f$ be a centered, stationary, ergodic and continuous Gaussian field on $\R^d$ and denote its covariance kernel by
$$\kappa(x)\coloneqq\E[f(0)f(x)],~~~ x\in\R^d.$$
We are interested in the geometry of its \emph{level sets}
$$\{f=\ell\} \coloneqq \{ x \in \R^d :~ f(x) = \ell \},~~~ \ell\in \R,$$
and particularly the \emph{nodal set} $\{f=0\}$.
Two of the most important examples of such Gaussian fields come respectively from the study of ``typical'' algebraic varieties and nodal sets of Laplace eigenfunctions.
The first example is the \emph{Bargmann--Fock field}, which characterized by $\kappa(x)=e^{-\frac12 ||x||_2^2}$. This field arises as the scaling limit of random homogeneous polynomials (the so called Kostlan ensamble) as the degree tends to infinity -- see e.g.~\cite{Bel22}. Therefore, the behaviour of the nodal set of the Bargmann--Fock field is related to that of typical algebraic varieties.
The second example is the \emph{monochromatic random wave}, which is characterized by $\kappa(x)=\int_{\S^{d-1}} e^{i\langle x,\omega \rangle}d\sigma(\omega)$. Similarly to the Bargmann--Fock field, it arises as the scaling limit of random spherical harmonics as the frequency tends to infinity. 

In dimension one, the level set $\{f=\ell\}$ is simply a discrete set of points and the study of its geometry boils down to the distribution of these point in space. A first answer to this problem dates back to the works of Kac \cite{Kac43} and Rice \cite{Ric44} in the 40's, where they computed the expected number of such points in a given region. In higher dimensions though, level sets are smooth hypersurfaces of codimension one, and studying their geometry becomes a much more challenging task. Even the seemingly simple problem of computing the (expected) number of connected components (to which we henceforth simply refer to as components for short) resisted for decades before being solved in the breakthrough work Nazarov \& Sodin \cite{NaSo09}. After that, other topological observables such as Betti numbers have been considered \cite{GW14,GW16}, culminating in an asymptotic law for the number of components with given topological types \cite{SaWi19,CaSa19}.

Despite the great progress described above, all the aforementioned works have the downside of failing to distinguish bounded and unbounded components, treating both types equally. As a complementary line of research, the large scale connectivities of level sets of continuous Gaussian fields and their connections to percolation theory have been the object of many recent works.
In this direction, it is more convenient to consider the \emph{excursion sets}
$$\{f\geq\ell\} \coloneqq \{ x \in \R^d :~ f(x) \geq \ell \},~~~ \ell\in \R.$$
By monotonicity in $\ell$, it makes sense to define the \emph{percolation threshold}
$$\ell_c\coloneqq\sup\{\ell\in \R:\, \P\big[\text{there is an unbounded component in } \{f\geq\ell\}\big]>0\}.$$
Since the level set $\{f=\ell\}$ is simply the boundary of the excursion set $\{f\geq \ell\}$, one can use topological arguments to deduce properties of the former from those of the latter. For instance, it turns out that the existence of an unbounded component for $\{f=\ell\}$ is equivalent to the existence of unbounded components for both $\{f\geq\ell\}$ and $\{f\leq\ell\}$ simultaneously. By the symmetry of $f$ around $0$, the distribution of $\{f\leq\ell\}$ is equal to that of $\{f\geq-\ell\}$, and one readily deduces that $\{f=\ell\}$ contains an unbounded component for $\ell\in (-\ell_c,\ell_c)$, while for $\ell\notin[-\ell_c,\ell_c]$ all components of $\{f=\ell\}$ are bounded.

Most of the progress made in this field concerns the case $d=2$. The main reason for this is a very special \emph{duality property} which is only available in the planar case. In particular, this property implies that $\ell_c=0$ for every planar Gaussian fields satisfying very mild assumptions \cite{MRV20}. Many works have been devoted to proving the so called \emph{sharpness of phase transition}, i.e.~the fact that the bounded components are ``tiny'' (or microscopic) for every non-critical level $\ell\neq\ell_c=0$ -- see e.g.~\cite{RVb,MV,MRV20} for sharpness results in 2D and \cite[Theorem 1.3]{MS22} for the precise notion of ``tiny''. In contrast, at the critical level $\ell_c=0$, which corresponds to the nodal set, it is expected that all components are still bounded, but some are ``large'' (or macroscopic), which has only been proved for positively correlated fields -- see \cite{Ale96,BG17}. Beyond that, it has been conjectured by Bogomolny \& Schmit \cite{BogSch02} that, for the monochromatic random wave in 2D, these nodal lines converge to SLE(6) after a scaling limit, exactly as for critical planar percolation \cite{Smi01}. This is also expected to hold for the Bargmann--Fock field and other planar fields with sufficiently fast decay of correlations.

The case $d\geq3$ is much less understood. Since one has $\ell_c=0$ on the plane, it is natural to expect that it is ``strictly easier'' to have an unbounded component in higher dimensions, i.e.~that $\ell_c>0$ as long as $d\geq3$. This was conjectured by Sarnak \cite{Sa1} for the monochromatic random wave, but is expected to hold for basically any continuous Gaussian fields. Very recently, Duminil-Copin, Rivera, Rodrigues \& Vanneuville \cite{DRRV21} proved that indeed $\ell_c>0$ in dimensions $d\geq3$ for all fields with positive and sufficiently fast decay of correlations, which includes in particular the Bargmann--Fock field. Rivera \cite{Riv21} then used this result and an approximation argument to prove that the same holds for the monochromatic random wave in sufficiently large dimensions. Despite proving the existence of an unbounded nodal component for $\{f=0\}$ in dimensions $d\geq3$, \cite{DRRV21} left open the question of uniqueness. This is precisely what we address in the present article. In order to state our results, we will impose the following assumptions on the field $f$.

	\begin{enumerate}[I]
		\item \label{a:1} \emph{(Smoothness)}  $\kappa\in C^{8}$.
		\item \label{a:2} \emph{(Non-degeneracy)} The Gaussian vector
		$(f(0),\nabla f(0))\in \R^{d+1}$
		is non-degenerate.
		\item \label{a:3} \emph{(Tame spectrum)} The Fourier transform $\hat{\kappa}$ (a positive measure, by Bochner's theorem) has uniformly positive density in a neighbourhood of $0$.
	\end{enumerate}
We can now state our main result, which proves Conjecture 1.8 of \cite{DRRV21}. 

\begin{theorem}\label{thm:uniq} If $f$ satisfies assumptions \ref{a:1}, \ref{a:2} and \ref{a:3}, then for every $\ell\in\R$ there exist at most one unbounded connected component in $\{f\geq\ell \}$ almost surely.
\end{theorem}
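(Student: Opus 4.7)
The overall strategy is to argue by contradiction, assuming that with positive probability $\{f \geq \ell\}$ contains at least two unbounded components. Ergodicity then forces the number $N$ of unbounded components of $\{f \geq \ell\}$ to equal, almost surely, some constant $k \in \{2, 3, \ldots\} \cup \{\infty\}$. The classical Burton--Keane / Newman--Schulman argument rules out such $k$ via finite energy; this is unavailable because $f$ is typically very rigid. I would replace finite energy by deterministic shifts delivered by the Cameron--Martin theorem: for every $h$ in the Cameron--Martin space $H_\kappa$ (the RKHS of $f$), the law of $f + h$ is equivalent to the law of $f$. Consequently every $\mu_f$-almost sure event is $\mu_{f + h}$-a.s.\ and vice versa, and positive-probability events are preserved in both directions.

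I would first dispose of the case $2 \leq k < \infty$. Given $R, M > 0$, I would construct $h \in H_\kappa$ with $h \geq M$ on $B_R$ by setting $h = \kappa \ast \phi$ for a nonnegative compactly supported density $\phi$; assumption~\ref{a:3} ensures $h \geq 0$, while assumptions \ref{a:2} and \ref{a:4} give that such $h$ lies in $H_\kappa$ with free control over the amplitude. Pick $R$ large so that the event $E_R = \{\text{at least } 2 \text{ unbounded components of } \{f \geq \ell\} \text{ meet } B_R\}$ has $\mu_f(E_R) > 0$, and $M$ large so that $G_M = \{\inf_{B_R} f \geq \ell - M\}$ has $\mu_f(G_M) > 1 - \mu_f(E_R)$. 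On $E_R \cap G_M$ one has $B_R \subseteq \{f + h \geq \ell\}$, so the two unbounded components touching $B_R$ fuse into one in $\{f + h \geq \ell\}$, giving $N(f + h) \leq k - 1$. On the other hand, Cameron--Martin applied to the a.s.\ identity $N(f) = k$ yields $N(f + h) = k$ a.s., a contradiction.

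The case $k = \infty$ is the main obstacle; I would handle it by a Burton--Keane-type argument with shifts replacing finite energy. Choose $R$ so that the event $E_R^{(3)} = \{\text{at least } 3 \text{ unbounded components meet } B_R\}$ has $\mu_f(E_R^{(3)}) > 0$, and a shift $h \in H_\kappa$ with $h \geq M$ on $B_R$ supported in $B_{R + 1}$. On $E_R^{(3)} \cap G_M$, the three components fuse through $B_R$ while the field is untouched outside $B_{R + 1}$, so the three "arms" remain in three distinct unbounded components of $\{f + h \geq \ell\} \setminus B_{R + 1}$. Thus $f + h$ has a trifurcation at scale $R + 1$ at the origin, in the sense that removing $B_{R + 1}(0)$ from a single unbounded component of $\{f + h \geq \ell\}$ leaves at least three unbounded pieces. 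Cameron--Martin transfers this positive-probability statement to $f$ itself, and stationarity of $\mu_f$ then furnishes a stationary point process of trifurcations of positive intensity. A topological Burton--Keane counting argument bounds the number of trifurcations in a cube of side $L$ by $O(L^{d - 1})$, contradicting the $\Omega(L^d)$ lower bound coming from positive intensity once $L$ is large.

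The two technical hurdles I expect are (i) engineering Cameron--Martin shifts with tunable amplitude, shape and support from the hypotheses on $\kappa$; this reduces to checking that convolutions $\kappa \ast \phi$ with smooth compactly supported $\phi$ belong to $H_\kappa$, which follows from $\kappa \in W^{1,1}$ via Fourier analysis, and that assumption~\ref{a:3} keeps such shifts nonnegative; and (ii) executing the Burton--Keane combinatorial/topological counting in the continuum at the fixed scale $R + 1$. Item~(ii) is the substantive difficulty: one must set up the trifurcation notion robustly against the (a priori intricate) topology of level sets of a smooth Gaussian field, and verify that the classical $O(L^{d - 1})$ surface bound on trifurcation counts survives this adaptation.
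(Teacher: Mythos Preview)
Your overall architecture (ergodicity $\Rightarrow$ constant $k$, rule out $2\le k<\infty$ by a merging shift, rule out $k=\infty$ by Burton--Keane with trifurcations produced by a shift) matches the paper. However, there is a genuine gap at the heart of your argument, and it is exactly the point the paper isolates as the main novelty.

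\medskip
\textbf{The error.} In the $k=\infty$ step you write: ``a shift $h\in H_\kappa$ with $h\ge M$ on $B_R$ \emph{supported in $B_{R+1}$}'', and then use that ``the field is untouched outside $B_{R+1}$'' to conclude the three arms stay in distinct components of $\{f+h\ge\ell\}\setminus B_{R+1}$. For the fields covered by the theorem, no such $h$ exists. The Cameron--Martin space of, e.g., the Bargmann--Fock field consists of real-analytic functions, so any nonzero $h\in H_\kappa$ has unbounded support. Your convolutions $\kappa\ast\phi$ are fine candidates for $H_\kappa$, but they inherit the support of $\kappa$, not of $\phi$. Thus your locality claim ``the field is untouched outside $B_{R+1}$'' is false and the trifurcation construction, as written, collapses.

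The same issue bites in the finite-$k$ case. From ``two unbounded components of $\{f\ge\ell\}$ touching $B_R$ merge in $\{f+h\ge\ell\}$'' you jump to $N(f+h)\le k-1$. Since $h\ge 0$ you do have $\{f\ge\ell\}\subset\{f+h\ge\ell\}$, but nothing prevents the shift from creating \emph{new} unbounded components (by linking infinitely many bounded pieces of $\{f\ge\ell\}$ through the set $\{\ell-h\le f<\ell\}$, which is unbounded because $h$ is). So $N(f+h)\le k-1$ is not justified.

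\medskip
\textbf{What the paper does instead.} The paper accepts that $h$ has unbounded support and proves a ``global equivalence'' lemma: for $h\ge 0$ with $h\in C^2\cap W^{2,\infty}\cap W^{1,1}$ (in particular for finite sums of translates of $\kappa$, using assumptions~\ref{a:3} and~\ref{a:4}), almost surely there exists $R_0$ such that $\{f\ge\ell\}\setminus B_{R_0}$ and $\{f+h\ge\ell\}\setminus B_{R_0}$ are percolation equivalent. The proof is a Kac--Rice computation bounding the expected number of $(x,t)\in(\R^d\setminus B_R)\times[0,1]$ with $f(x)+th(x)=\ell$ and $\nabla(f+th)(x)=0$ by $\int_{\R^d\setminus B_R}(|h|+|\nabla h|)$, which tends to $0$ as $R\to\infty$ precisely because $h\in W^{1,1}$. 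This lemma is what replaces your (unavailable) compact support: it says the shift, though globally nonzero, is \emph{percolation-local}. With it, both halves of Burton--Keane go through exactly as you outline. Your item~(ii) is then handled as in the classical argument, with the $O(L^{d-1})$ surface bound coming from a Kac--Rice estimate on the number of components of $\{f\ge\ell\}\cap\partial B_L$.
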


By standard topological arguments, one can deduce the following.

\begin{corollary}\label{cor:uniq_nodal}
	If $f$ satisfies assumptions \ref{a:1}, \ref{a:2} and \ref{a:3}, then for every $\ell\in \R$ there exist at most one unbounded connected component in $\{f=\ell\}$ almost surely.
\end{corollary}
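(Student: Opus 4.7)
The plan is to reduce Corollary \ref{cor:uniq_nodal} to Theorem \ref{thm:uniq} applied to both $f$ and $-f$, together with a simple topological argument exploiting the fact that the nodal set is a.s.\ a smooth codimension-one submanifold.

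First I would observe that the assumptions \ref{a:1}--\ref{a:4} are invariant under $f\mapsto -f$ (the covariance is unchanged), so Theorem \ref{thm:uniq} applied to $-f$ at level $-\ell$ yields that $\{f\leq\ell\}=\{-f\geq-\ell\}$ almost surely has at most one unbounded connected component. Denote the a.s.\ unique unbounded components of $\{f\geq\ell\}$ and $\{f\leq\ell\}$, when they exist, by $C^+$ and $C^-$. By \ref{a:2} and Bulinskaya's lemma, a.s.\ $\nabla f\neq 0$ on $\{f=\ell\}$, so the nodal set is a smooth orientable $(d-1)$-submanifold of $\R^d$; in particular, each of $C^+,C^-$ is the closure of $\{f>\ell\},\{f<\ell\}$ respectively. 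Any unbounded connected component $N$ of $\{f=\ell\}$ lies in a unique component of $\{f\geq\ell\}$, which must be unbounded, hence $N\subset C^+$; the same argument gives $N\subset C^-$.

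Suppose for contradiction that $\{f=\ell\}$ admits two distinct unbounded connected components $N_1$ and $N_2$, and pick $x_i\in N_i$. Since $C^+$ is connected and $\{f>\ell\}=\mathrm{int}(C^+)$ is open and path-connected, by a generic perturbation one can produce a continuous path $\gamma^+\colon[0,1]\to C^+$ from $x_1$ to $x_2$ that stays in $\{f>\ell\}$ except at its endpoints, where it crosses $N_1,N_2$ transversally along the direction $\nabla f$. Similarly, build $\gamma^-\subset C^-$ from $x_1$ to $x_2$ through $\{f<\ell\}$. The concatenation $\gamma:=\gamma^+\ast(\gamma^-)^{-1}$ is then a continuous loop in $\R^d$ whose intersection with $\{f=\ell\}$ consists of the two transverse points $x_1\in N_1$ and $x_2\in N_2$.

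Because $\R^d$ is simply connected, $\gamma$ bounds a smooth immersed disc $D$; by a small perturbation we may assume $D$ is transverse to the smooth submanifold $N_1$. Then $D\cap N_1$ is a compact $1$-manifold with boundary
\[
\partial(D\cap N_1)=\gamma\cap N_1=\{x_1\},
\]
a single point. This contradicts the classical fact that a compact $1$-manifold has an even number of boundary points, completing the proof.

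The main obstacle is the transversality/genericity step: one must argue that $\gamma^\pm$ can be taken to meet $\{f=\ell\}$ only at the prescribed endpoints (and transversally), and that the bounding disc $D$ can be taken transverse to $N_1$. Both points are standard (the interiors of $C^\pm$ are open and path-connected, and the set of transverse discs is dense); all other ingredients are straightforward consequences of Theorem \ref{thm:uniq} and the a.s.\ smoothness of the nodal hypersurface provided by \ref{a:1} and \ref{a:2}.
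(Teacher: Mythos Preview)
Your reduction to Theorem~\ref{thm:uniq} (applied to both $f$ and $-f$) is the same as the paper's, but the topological endgame differs. The paper argues via Jordan--Brouwer separation: one unbounded nodal component $A$ splits $\R^d$ into two open pieces $U\cup V$; the other component $B$ lies in, say, $U$; since $A$ is connected and $\nabla f$ does not vanish on it, the sign of $f-\ell$ on the $U$-side of $A$ is constant, so $A$ and $B$ lie in distinct unbounded components of either $\{f\geq\ell\}$ or $\{f\leq\ell\}$, contradicting Theorem~\ref{thm:uniq}. Your argument instead builds a loop through $x_1,x_2$ that crosses $\{f=\ell\}$ exactly twice (once on each $N_i$), caps it with a disc transverse to $N_1$, and derives a parity contradiction for the compact $1$-manifold $D\cap N_1$. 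This is a correct and pleasant mod-$2$ intersection argument; it trades the Jordan--Brouwer step for the relative transversality theorem. The paper's route is a little shorter because it avoids constructing $\gamma^\pm$ and the disc, while yours has the advantage of not invoking separation for a non-compact hypersurface.

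Two cosmetic points to clean up: (i) the sentence ``$C^+$ is the closure of $\{f>\ell\}$'' is not literally true (there may be bounded components of $\{f>\ell\}$ outside $C^+$); what you actually need, and use, is that $\mathrm{int}(C^+)=C^+\cap\{f>\ell\}$ is path-connected, which follows because $C^+$ is a connected smooth manifold with boundary (collar along $\nabla f$). (ii) ``immersed disc'' should just be a smooth map $H:D^2\to\R^d$ with $H|_{\partial D^2}=\gamma$; you do not need (and cannot in general get) an immersion, but transversality of $H$ to $N_1$ rel boundary is all that is required for $H^{-1}(N_1)$ to be a compact $1$-manifold with boundary a single point.
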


\begin{remark}\label{rem:assumptions}
	Assumption~\ref{a:3} is automatically satisfied if $\kappa\in L^1$ (which implies $\hat{\kappa}\in C^0$) and $\hat{\kappa}(0)=\int_{\R^d}\kappa(x)dx>0$ -- notice that this does not require positive correlations, i.e.~$\kappa\geq0$. In particular, our assumptions are satisfied by the Bargmann--Fock field. It is straightforward to check that the Cauchy fields (given by $\kappa(x)=(1+||x||_2^2)^{-\alpha/2}$) also satisfy our assumptions for every exponent $\alpha>0$. We stress that this includes the strongly correlated cases, i.e.~$\alpha\in(0,d]$.
	Finally, we note that  assumption \ref{a:3} is not satisfied by the monochromatic random wave, for which uniqueness remains open.
\end{remark}

The proof of Theorem~\ref{thm:uniq} follows the approach of Burton \& Keane \cite{BurKea89}. The proof of \cite{BurKea89} strongly relies on a \emph{finite energy} property -- or more precisely, \emph{insertion tolerance}, see e.g.~\cite[Theorem 7.8]{LP16}. Roughly speaking, this property says that conditionally on what happens in the complement of a bounded region $U$, the probability of a desired event in $U$ remains positive. However, the fields we consider here are often analytic functions almost surely (which is the case of the Bargmann--Fock field for instance), and in particular its restriction to any open set determines the field globally. Therefore, this crucial property is not available in our context. A naive approach would be to apply the Burton--Keane argument to a discretized version $f_\varepsilon$ of the field $f$ -- say, its restriction to the lattice $\varepsilon\Z^d$ -- and then take the limit $\varepsilon\to 0$. This strategy runs into two problem. First, it is not clear how to prove that $f_\varepsilon$ has finite energy. Second (and more importantly!), even if $f_\varepsilon$ does have finite energy, it is not clear how to properly take the limit $\varepsilon\to0$. Indeed, uniqueness is a highly non-local event and the Burton--Keane argument, being based on ergodicity, gives no quantitative information in finite volume, thus making it difficult to perform a limiting argument.

We overcome the lack of finite energy by using a soft shift argument based on the Cameron--Martin theorem for Gaussian fields. 
This technique has been recently used by the author \cite{Sev21} to prove sharpness of phase transition in all dimensions for positively and weakly correlated fields. A crucial difference with \cite{Sev21} is that in our case we need to further ensure that, despite the unbounded support of the shift function, the effective modification is only local, which is ultimately guaranteed by assumption \ref{a:3} -- see Lemmas~\ref{lem:global_equiv} and \ref{lem:tame_shift}.

\section{Preliminaries}\label{sec:prelim}

In this section we overview a few basic properties of Gaussian fields. In what follows, $B_R(x)$ denotes the open Euclidean ball of radius $R$ centered at $x$. The closure and boundary of $B_R(x)$ are denoted by $\overline{B_R}(x)$ and $\partial B_R(x)$, respectively. We may omit $x$ from the notation when $x=0$. 

The following basic lemma provides an upper bound on the expected number of components intersecting the sphere by its area. The result is well known and the proof follows standard arguments based on the Kac--Rice formula. We choose to include the precise formulation we need and its proof here for the sake of completeness. Obtaining the precise asymptotics for the (expected) number of components is a much more challenging problem -- see \cite{NaSo09}.

\begin{lemma}\label{lem:number_comp}
	Assume that $f$ satisfies assumptions \ref{a:1} and \ref{a:2}. Then there exists a constant $C_0\in(0,\infty)$ such that for every $L\geq1$ and $\ell\in \R$, one has
	$$\E[\mathcal{N}_{\partial B_L}(f,\ell)]\leq C_0L^{d-1},$$
	where $\mathcal{N}_{\partial B_L}(f,\ell)$ denotes the number of components in $\{ f\geq\ell \}\cap \partial B_L$.
\end{lemma}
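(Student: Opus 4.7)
The plan is to bound $\mathcal N_{\partial B_L}(f,\ell)$ by the number of critical points of the restriction $f|_{\partial B_L}$, and then to estimate the latter via the Kac--Rice formula applied to the tangential gradient.

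I would first carry out a topological reduction. Observe that $\partial B_L$ is a smooth compact $(d-1)$-manifold without boundary. By assumption~\ref{a:2}, the Gaussian vector $(f(x),\nabla_T f(x))\in\R^d$ is non-degenerate for each $x\in\partial B_L$, where $\nabla_T f$ denotes the gradient of $f$ restricted to $\partial B_L$: indeed it is obtained from the non-degenerate vector $(f(0),\nabla f(0))$ by a surjective linear projection (discarding only the normal component of the gradient). Bulinskaya's lemma then yields that almost surely no point of $\partial B_L$ simultaneously satisfies $f(x)=\ell$ and $\nabla_T f(x)=0$, i.e.\ $\ell$ is a.s.\ a regular value of $f|_{\partial B_L}$. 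On this full-probability event, each component $K$ of $\{f\ge\ell\}\cap\partial B_L$ is the closure in $\partial B_L$ of a non-empty component of the open set $\{f>\ell\}\cap\partial B_L$, so $\sup_K f>\ell$ and this supremum is attained in the relative interior of $K$, at a critical point of $f|_{\partial B_L}$. Hence
$$\mathcal N_{\partial B_L}(f,\ell)\;\le\;\#\bigl\{x\in\partial B_L:\nabla_T f(x)=0\bigr\}\quad\text{a.s.}$$

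I would then bound the right-hand side by the Kac--Rice formula, which expresses the expected number of zeros of the $C^1$ random vector field $\nabla_T f$ as
$$\E\bigl[\#\{\nabla_T f=0\}\bigr]=\int_{\partial B_L}\E\bigl[\,|\det\operatorname{Hess}_T f(x)|\,\bigm|\,\nabla_T f(x)=0\bigr]\,p_{\nabla_T f(x)}(0)\,d\sigma(x),$$
where $\operatorname{Hess}_T f$ is computed in a local orthonormal frame of the tangent bundle. By stationarity, the joint law of $(\nabla f(x),\nabla^2 f(x))$ does not depend on $x$, and the projection onto the tangent hyperplane $T_x\partial B_L$ depends only continuously on the outward normal $x/|x|\in S^{d-1}$. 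Compactness of $S^{d-1}$ therefore provides a uniform bound on the Kac--Rice integrand: the density $p_{\nabla_T f(x)}(0)$ is bounded by \ref{a:2}, and finiteness of the conditional expectation of $|\det\operatorname{Hess}_T f|$ follows from moment bounds on $\nabla^2 f$ (Wick's formula plus Cauchy--Schwarz), which are guaranteed by $\kappa\in C^8$. Multiplying the uniform integrand bound by $|\partial B_L|=O(L^{d-1})$ then gives the desired estimate, with a constant $C_0$ independent of both $L$ (by stationarity) and $\ell$ (because we bound by the total number of critical points, which carries no $\ell$-dependence).

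The argument is essentially a routine application of the Kac--Rice machinery, so I do not anticipate a substantial obstacle. The points requiring most care are (i) verifying that the projection of the stationary Gaussian jet $(\nabla f,\nabla^2 f)$ onto a varying tangent space preserves non-degeneracy and depends continuously on the direction, so that the Kac--Rice density is uniformly bounded in $x\in\partial B_L$; and (ii) verifying Bulinskaya's lemma in this curved setting to ensure that $\ell$ is almost surely a regular value, which is what allows the identification of one critical point per component.
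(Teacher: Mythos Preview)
Your proposal is correct and follows essentially the same route as the paper: bound $\mathcal N_{\partial B_L}$ by the number of critical points of $f|_{\partial B_L}$, then apply the Kac--Rice formula to the spherical gradient and use stationarity to bound the integrand uniformly. The only difference is that you invoke Bulinskaya's lemma to ensure each component has non-empty interior, whereas the paper observes directly that the maximum of $f$ on any compact component $K$ is automatically a local maximum of $f|_{\partial B_L}$ (if the max value exceeds $\ell$ this is clear; if it equals $\ell$ then $f\equiv\ell$ on $K$ and every point of $K$ is a local max since $f<\ell$ just outside $K$), so the regularity step is not actually needed.
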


\begin{proof}
	By compactness and continuity, every component of $\{ f\geq\ell \}\cap \partial B_L$ contains a local maximum of $f$ on $\partial B_L$, thus a critical point. Hence $\mathcal{N}_{\partial B_L}\leq N\coloneqq |\{x\in \partial B_L:~\nabla f(x)=0\}|$, where $\nabla$ denotes the spherical gradient. 
	By assumptions~\ref{a:1} and \ref{a:2}, we can apply of the Kac--Rice formula to the field $\nabla f $ -- see Theorem~6.2 and Proposition~6.5 of \cite{AW09} -- to obtain 
	$$\E[N]=\int_{\partial B_L} \E[|\det \nabla^2 f(x)| \,\big|\, \nabla f(x)=0] p_{\nabla f(x)}(0) dx,$$
	where $p_{\nabla f(x)}(0)$ denotes the density function of $\nabla f(x)$ at $0$. Since $f$ is stationary and the curvature of $\partial B_L$ is bounded for $L\geq1$, there exists $C\in(0,\infty)$ such that $\E[|\det \nabla^2 f(x)| \,\big|\, \nabla f(x)=0] p_{\nabla f(x)}(0) \leq C$ for every $x$, and the proof is complete.
\end{proof}

Given two subsets $A\subset B\subset \R^d$, we say that $A$ and $B$ are \emph{percolation equivalent} if the natural inclusion map from the components of $A$ onto the components of $B$ is a bijection that furthermore maps bounded components onto bounded components.
Given a function $h:\R^d\to [0,\infty)$, let $\textrm{GE}(f,\ell,h)$ be the event that there exists $R>0$ such that $\{f\geq\ell\}\setminus B_R$ and $\{f+h\geq\ell\}\setminus B_R$ are percolation equivalent. We refer to this event as ``global equivalence''. Intuitively, global equivalence means that, from a percolation point of view, shifting the field $f$ by $h$ only changes things locally. 

The following lemma states that global equivalence holds almost surely whenever the shift function $h$ and its derivatives decay sufficiently fast. This lemma will be a crucial ingredient in the proof of Theorem~\ref{thm:uniq}.

\begin{lemma}\label{lem:global_equiv}
	Assume that $f$ satisfies assumptions \ref{a:1} and \ref{a:2}. Then for every
	non-negative function $h\in C^2\cap W^{2,\infty}\cap W^{1,1}$, and every $\ell\in\R$,
	$$\P[\emph{\textrm{GE}}(f,\ell,h)]=1.$$
\end{lemma}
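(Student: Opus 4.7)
The plan is a Morse-theoretic argument on the one-parameter family $f_t := f + th$, $t \in [0,1]$, which continuously deforms $f_0 = f$ into $f_1 = f+h$. Call $(t_0, x_0) \in [0,1] \times \R^d$ a \emph{critical pair} if $f_{t_0}(x_0) = \ell$ and $\nabla f_{t_0}(x_0) = 0$. Away from critical pairs, the level set $\{f_t = \ell\}$ varies smoothly in $t$ by the implicit function theorem. So the main idea is to establish that almost surely there exists a (random) $R > 0$ such that no critical pair lies in $[0,1] \times (\R^d \setminus B_R)$, and then to build an isotopy on $\R^d \setminus B_R$ carrying $\{f \geq \ell\} \setminus B_R$ onto $\{f+h \geq \ell\} \setminus B_R$. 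The isotopy-induced bijection will coincide with the natural inclusion map appearing in the definition of GE because, as $h \geq 0$, the flow trajectory $t \mapsto \Phi_t(p)$ stays inside $\{f_t \geq \ell\} \subseteq \{f+h \geq \ell\}$, so $p$ and $\Phi_1(p)$ lie in the same component of $\{f+h \geq \ell\}$; boundedness is preserved because $\Phi_1$ is a homeomorphism.

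The core step is the Kac-Rice estimate on the number of critical pairs. First, $h \in W^{1,1} \cap W^{2,\infty}$ forces both $h$ and $|\nabla h|$ to be Lipschitz, bounded and in $L^1$, hence tending to $0$ at infinity. Apply the multi-dimensional Kac-Rice formula (valid by assumptions \ref{a:1}--\ref{a:2}) to the smooth random map
$$F: [0,1] \times \R^d \to \R^{d+1}, \qquad F(t, x) := (f_t(x) - \ell,\ \nabla f_t(x)),$$
whose zero set is exactly the set of critical pairs. A direct calculation shows that at a zero of $F$ the Jacobian is block-triangular,
$$\partial_{(t, x)} F(t, x) = \begin{pmatrix} h(x) & 0 \\ \nabla h(x) & \nabla^2 f_t(x) \end{pmatrix}, \qquad \text{so}\qquad |\det \partial_{(t, x)} F(t, x)| = h(x)\, |\det \nabla^2 f_t(x)|.$$
The crucial algebraic fact is that $h(x)$ appears as a scalar prefactor. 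Combining with uniform bounds on the joint density of $(f_t(x), \nabla f_t(x))$ at $(\ell, 0)$ and on the conditional expectation of $|\det \nabla^2 f_t(x)|$ (both consequences of stationarity of $f$ and assumptions \ref{a:1}--\ref{a:2}, together with the uniform control of $\nabla h$ and $\nabla^2 h$), Kac-Rice yields, for every Borel $A \subseteq \R^d$,
$$\E\bigl[\#\{\text{critical pairs in } [0,1] \times A\}\bigr] \leq C \int_A h(x)\, dx.$$
Taking $A = \R^d \setminus B_L$ and using $h \in L^1$, the right-hand side tends to $0$ as $L \to \infty$; hence almost surely the required $R$ exists.

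Once such an $R$ is fixed, the isotopy is constructed by standard methods: on a neighborhood of $\{(t,x) : f_t(x) = \ell\} \cap ([0,1] \times (\R^d \setminus B_R))$, the vector field
$$v_t(x) := -\frac{h(x)}{|\nabla f_t(x)|^2}\,\nabla f_t(x)$$
is well-defined since no critical pair lies in this set. Differentiating $f_t(\Phi_t(p))$ in $t$ and using $\nabla f_t \cdot v_t = -h = -\partial_t f_t$ shows that $\Phi_t$ preserves the $f_t$-level, so $\Phi_t(\{f_0 \geq \ell\}) = \{f_t \geq \ell\}$. A smooth cutoff that makes $v_t$ vanish near $\partial B_R$ (and away from the level set) ensures the flow is complete on $\R^d \setminus B_R$, and setting $t = 1$ yields the desired percolation equivalence.

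The main obstacle is the Kac-Rice estimate in step two: one must carry it out on the non-compact domain $[0,1] \times \R^d$, which requires the clean factorization of the Jacobian determinant isolating $h(x)$ as a pointwise prefactor. It is only through the integrability $h \in L^1$ that the total expected number of critical pairs becomes finite, and this is the sole place where the $W^{1,1}$-integrability of $h$ (which in the applications will come from the analogous integrability of $\kappa$ provided by assumption \ref{a:4}) is used.
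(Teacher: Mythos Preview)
Your approach shares the core idea with the paper: study the one-parameter family $f_t=f+th$, identify obstructions to percolation equivalence as ``critical pairs'' $(t,x)$ with $f_t(x)=\ell$ and $\nabla f_t(x)=0$, and control their number via Kac--Rice. Your observation that the Jacobian of $F=(f_t-\ell,\nabla f_t)$ is block-triangular at its zeros, yielding exactly $h(x)\,|\det\nabla^2 f_t(x)|$, is in fact tidier than the cruder bound the paper uses for the interior term.

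There is, however, a real gap in the deterministic step ``no critical pairs outside $B_R$ $\Rightarrow$ percolation equivalence on $\R^d\setminus B_R$'' via isotopy. Two problems: (i) the cutoff making $v_t$ vanish near $\partial B_R$ destroys the identity $\Phi_t(\{f\ge\ell\})=\{f_t\ge\ell\}$ in that neighbourhood --- there $\Phi_t$ is the identity while $\{f_t\ge\ell\}$ genuinely moves, so you can no longer read off the bijection of components from $\Phi_1$; (ii) completeness of the flow on the non-compact domain is not established --- $|v_t|=h/|\nabla f_t|$ has no a~priori bound, since the absence of critical pairs gives no lower bound on $|\nabla f_t|$ along the level set at infinity. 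Neither is a triviality: components of $\{f_t\ge\ell\}\setminus B_R$ can merge, emerge, or change boundedness through $\partial B_R$ at \emph{spherical} (stratified) critical points that your interior count does not detect.

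The paper sidesteps both issues by working on \emph{compact} annuli $A(R,M)=\overline{B_M}\setminus B_R$ with their stratification $A^0\cup A^1$, arguing that failure of GE forces, for every $R$ and all large $M$, a stratified critical pair in $A(R,M)$ (interior \emph{or} on $\partial B_R\cup\partial B_M$). Kac--Rice is applied on each stratum; the boundary-sphere contribution is controlled by surface integrals $\int_{\partial B_R\cup\partial B_M}(|h|+|\nabla h|)$, which can be made small along subsequences by Fubini and $h\in W^{1,1}$. This boundary term is exactly where the integrability of $|\nabla h|$ (not just of $h$) is used. If you want to retain your isotopy route you must either rule out boundary-sphere critical pairs as well --- which brings back the stratified count --- or build the flow far more carefully; the paper's compact-annulus reduction is the cleanest way around both obstacles.
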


\begin{proof}
	We will work with the annuli $A(R,M)=\overline{B_M}\setminus B_R$, $M>R>0$, and its natural decomposition $A(R,M)=A^0(R,M)\cup A^1(R,M)$ with $A^0(R,M)\coloneqq B_M\setminus \overline{B_R}$ and $A^1(R,M)\coloneqq \partial B_R\cup\partial B_M$. In what follows, $\nabla$ denotes either the standard Euclidean gradient on $A^0(R,M)$ or the spherical gradient on $A^1(R,M)$.  Consider the sets $\mathcal{E}_t\coloneqq\{f+th\geq\ell\}$, $t\in[0,1]$, interpolating continuously between $\mathcal{E}_0=\{f\geq\ell\}$ and $\mathcal{E}_1=\{f+h\geq\ell\}$.
	By definition, if $\textrm{GE}(f,\ell,h)$ does not happen then for every $R\geq 1$ there exists $M_0=M_0(R)>R$ such that for every $M\geq M_0$ at least one of the following happens:
	\begin{itemize}
		\item (Merging) there exist two distinct components of $\mathcal{E}_0\cap A(R,M)$ that get connected in $\mathcal{E}_1\cap A(R,M)$,
		\item (Emergence) $\mathcal{E}_1\cap A(R,M)$ contains a component disjoint from those of $\mathcal{E}_0\cap A(R,M)$,
		\item (Explosion) $\mathcal{E}_0\cap A(R,M)$ contains a component that does not intersect $\partial B_{M_0}$, but that does intersect $\partial B_M$ in $\mathcal{E}_1\cap A(R,M)$.
	\end{itemize} 
	Fix such a pair $R,M$. By considering the ``first time'' $t$ at which merging, emergence or explosion of components occurs and its ``location'' $x\in A(R,M)$, one finds $t$ and $x$ such that $x$ is a critical point of $f+th$ with value $\ell$, i.e.~$\nabla f(x)+t\nabla h(x)=0$ and $f(x)+th(x)=\ell$ -- emergence and explosion correspond to local maxima (either in $A^0(R,M)$ or on $A^1(R,M)$ in the case of emergence, but necessarily on $\partial B_M$ in the case of explosion) and merging corresponds to saddle points (again, either in $A^0(R,M)$ or on $A^1(R,M)$). As a conclusion, we have $N^h_0(R,M)+N^h_1(R,M)\geq1$, where $N^h_i(R,M)\coloneqq |\{(x,t)\in A^i(R,M)\times[0,1]:\, \nabla f(x)+t\nabla h(x)=0 \text{ and } f(x)+th(x)=\ell\}|$, $i\in\{0,1\}$. All in all, we conclude that
	\begin{equation}\label{eq:GE_bound}
		\P[\textrm{GE}(f,\ell,h)]\geq \limsup_{R\to\infty}\limsup_{M\to\infty} \P[N_0^h(R,M)+N_1^h(R,M)=0].
	\end{equation}
	
	Notice that $N_i^h(R,M)$ simply counts the number of zeros of the Gaussian fields $g_i(x,t)\coloneqq (f(x)+th(x)-\ell, \nabla f(x)+t\nabla h(x))$ on $A^i(R,M)\times[0,1]$, $i\in\{0,1\}$. We can then apply the Kac--Rice formula to $g_0$ -- see again Theorem~6.2 and Proposition~6.5 of \cite{AW09} -- to obtain
	$$\E[N_0^h(R,M)]=\int_{0}^{1}\int_{A^0} \E[|\det g'_0(x,t)| \,\big|\, g_0(x,t)=0] p_{g_0(x,t)}(0) dxdt,$$
	where $g'_0$ is the Jacobian matrix of $g_0$ and $p_{g_0(x,t)}(0)$ is the density function of $g_0(x,t)$ evaluated at $0$. Now notice that $|\det g'_0|\leq C_d |\partial_t g_0| \prod_{i=1}^{d}|\partial_{x_i} g_0|$ and $|\partial_t g_0(x,t)|= \max\{|f(x),|\nabla f(x)|\}\leq |f(x)|+|\nabla f(x)|$, where $|\cdot|$ denotes the $\ell^\infty$ norm on $\R^d$ or $\R^{d+1}$. Also, by stationarity and boundedness of $h$, $\nabla h$ and $\nabla^2 h$ (since $h\in W^{2,\infty}$), one can easily see that there exists a constant $C=C(f,h)$ such that $\E[\prod_{i=1}^{d}|\partial_{x_i} g_0(x,t)| \,\big|\, g_0(x,t)=0]p_{g_0(x,t)}(0)\leq C$ for all $x,t$. We then conclude that $\E[N_0^h(R,M)]\leq C\int_{A^0(R,M)} |h(x)|+|\nabla h(x)| dx$. Analogously, we have $\E[N_1^h(R,M)]\leq C\int_{A^1(R,M)} |h(x)|+|\nabla h(x)| dx$.
	Hence, by the Markov inequality we obtain
	\begin{align*}
		\P[N_0^h(R,M)+N_1^h(R,M)=0] \geq 1- C\Big(\int_{A^0(R,M)} |h(x)|+|\nabla h(x)| dx + \int_{A^1(R,M)} |h(x)|+|\nabla h(x)| dx\Big).
	\end{align*}
	\sloppy By the assumption that $\int_{\R^d} h(x)+|\nabla h(x)|dx <\infty$ (since $h\in W^{1,1}$) and Fubini's theorem, we have $\limsup_{R\to\infty}\limsup_{M\to\infty} \P[N_0^h(R,M)+N_1^h(R,M)=0]=1$, which concludes the proof by \eqref{eq:GE_bound}.
\end{proof}

\begin{remark}
	We believe that Lemma~\ref{lem:global_equiv} should also hold if the notion of percolation equivalence is replaced by that of \emph{topological equivalence}, namely the existence of a homeomorphism $\phi:\R^d\to \R^d$ with $\phi(A)=B$, but we will not need this.
\end{remark}

Given a stationary, continuous Gaussian field $f$ on $\R^d$ with covariance kernel $\kappa$, one can define the associated \emph{Cameron--Martin space} $H$. It is defined as the Hilbert space obtained by completing the linear span of $(\kappa(\cdot-x))_{x\in\R^d}$ with respect to the inner product given by 
$$\Big\langle \sum_{x\in X} a_{x} \kappa(\cdot-x),\, \sum_{y\in Y} b_{y} \kappa(\cdot-y)  \Big\rangle_H\coloneqq \sum_{x\in X,\, y\in Y} a_x b_y \kappa(x-y).$$ 
Intuitively, $H$ is the Hilbert space such that $f$ is the standard Gaussian random variable on $H$  -- see e.g.~\cite{janson_1997} for more details. 
The Cameron--Martin theorem states that for every $h\in H$, the distributions of $f$ and $f+h$ are mutually absolutely continuous with an explicit Radon--Nikodym derivative. We will only use the following direct consequence of the Cameron--Martin theorem.
	\begin{lemma}\label{lem:Cameron-Martin}
		Let $H$ be the Cameron--Martin space of $f$. Then, for every $h\in H$ and every measurable set $E\subset \R^{\R^d}$, one has that $\P[f\in E]>0$ if and only if $\P[f+h\in E]>0$.
	\end{lemma}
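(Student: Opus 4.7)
The plan is to invoke the Cameron--Martin theorem directly and to read off the conclusion from the positivity of the associated Radon--Nikodym derivative. Concretely, the theorem provides, for each $h\in H$, an almost surely strictly positive random variable $Z_h$ (explicitly an exponential of a centered Gaussian) such that
$$\E[F(f+h)]=\E[F(f)\,Z_h]$$
for every bounded measurable functional $F$ on $\R^{\R^d}$.

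First I would specialize this identity to $F=\mathbf{1}_E$, which gives $\P[f+h\in E]=\E[\mathbf{1}_E(f)\,Z_h]$. Since $Z_h>0$ almost surely, the integrand is strictly positive precisely on the event $\{f\in E\}$, and so the right-hand side is positive if and only if $\P[f\in E]>0$. This already yields one direction of the equivalence.

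For the converse I would repeat the argument after replacing $h$ by $-h$, which also belongs to $H$ since $H$ is a linear space, applied to the field $f+h$. Equivalently, one can observe that the same identity shows the laws of $f$ and $f+h$ to be mutually absolutely continuous and hence to share the same family of null sets.

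I do not anticipate any substantive obstacle: the lemma is a direct translation of Cameron--Martin absolute continuity into a statement about positivity of measures, and the only point requiring a brief check is that $\mathbf{1}_E$ is measurable on $\R^{\R^d}$, which is exactly the hypothesis on $E$.
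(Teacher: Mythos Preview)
Your proposal is correct and follows exactly the route the paper indicates: the paper does not give a detailed proof of this lemma but states it as a ``direct consequence of the Cameron--Martin theorem,'' namely the mutual absolute continuity of the laws of $f$ and $f+h$ with an explicit strictly positive Radon--Nikodym derivative. Your argument simply spells out this consequence, so there is nothing to compare.
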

The next lemma allows us to produce a non-negative shift function $h\in H$ satisfying the assumption of Lemma~\ref{lem:global_equiv} above.
\begin{lemma}\label{lem:tame_shift}
	If assumption~\ref{a:3} is satisfied, then there exists a function $h\in H$ such that $h\geq0$, $h(0)>0$ and $h\in \mathcal{S}$, where $\mathcal{S}$ denotes the Schwartz space (notice that $\mathcal{S}\subset  C^2\cap W^{2,\infty}\cap W^{1,1}$).
\end{lemma}
\begin{proof}Assume that $\hat{\kappa}(x)\geq \varepsilon$ for all $x\in B_\delta$, with $\varepsilon,\delta>0$. It is straightforward to check that $H$ contains every function of the form $h=\kappa\ast \rho$, with $\rho:\R^d\to \R$ satisfying $\int \int \rho(x)\rho(y)\kappa(x-y)dx dy<\infty$. Let $h\in \mathcal{S}$ be such that $h\geq 0$, $h(0)>0$ and $\mathrm{supp}(\mathcal{F} h)\subset B_\delta$, where $\mathcal{F}$ denotes the Fourier operator -- one can take for instance $h\coloneqq (\mathcal{F}^{-1}g)^2$, where $g$ is any non-trivial, non-negative, symmetric bump function supported on $B_{\delta/2}$. Now simply notice that by setting $\rho\coloneqq \mathcal{F}^{-1} \big( \frac{\mathcal{F} h}{\mathcal{F} \kappa} \big)$, one obtains $h=\kappa\ast\rho$ and $\int \int \rho(x)\rho(y)\kappa(x-y)dx dy =\langle \kappa\ast\rho, \rho \rangle = \langle \mathcal{F}{\kappa}\cdot\mathcal{F}{\rho}, \mathcal{F}{\rho} \rangle = \int_{B_\delta} (\mathcal{F} h)^2/\mathcal{F}\kappa \leq ||\mathcal{F}h||_2 /\varepsilon <\infty$.
\end{proof}

\section{Proof of uniqueness}\label{sec:proof}

In this section we prove Theorem~\ref{thm:uniq} and Corollary~\ref{cor:uniq_nodal}. We follow the approach of Burton \& Keane \cite{BurKea89}, which goes as follows. Let $\mathcal{N}_\infty(f,\ell)$ be the number of unbounded components of $\{f\geq\ell\}$. By ergodicity $\P[\mathcal{N}_\infty(f,\ell)=k]\in \{0,1\}$ for all $k\in \N_\infty\coloneqq\{0,1,2,\cdots\}\cup\{\infty\}$. Therefore, there exists a unique $k_0 \in \N_{\infty}$ such that $\mathcal{N}_\infty(f,\ell)=k_0$ almost surely. On the one hand, if we assume that $k_0\in\{2,3,\cdots\}$, then one should be able to merge all components \emph{locally}, thus constructing a unique unbounded component, which leads to a contradiction. On the other hand, if we assume that $k_0=\infty$ (actually $k_0\geq3$ is enough), then by another \emph{local} merging, one would prove that certain ``trifurcations points'' exist with positive probability (and therefore have positive density by stationarity). However, a combinatorial argument of Burton \& Keane \cite{BurKea89} shows that the number of trifurcations is smaller than the number of boundary components, which in turn has zero density by Lemma~\ref{lem:number_comp}, thus leading to a contradiction. 

As no finite energy (or insertion-tolerance) property is available in our setting, we implement the local merging by shifting the field $f$ by an appropriate non-negative function $h\in H$. However, since the shift function $h$ has possibly (in fact, typically) unbounded support, it is not a priori clear whether this modification is only local. This will be guaranteed by Lemmas~\ref{lem:global_equiv} and \ref{lem:tame_shift}.

Theorem~\ref{thm:uniq} follows readily from the following two propositions.

\begin{proposition}\label{prop:no_finite>1}
If $f$ satisfies assumptions \ref{a:1}, \ref{a:2} and \ref{a:3}, then for every $\ell\in\R$ one has
$$\P[\mathcal{N}_\infty(f,\ell)\in \{2,3,\cdots\}]=0.$$
\end{proposition}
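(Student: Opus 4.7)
By ergodicity, $\mathcal{N}_\infty(f,\ell)$ is a.s.\ equal to some $k_0\in\N_\infty$, so I will assume by contradiction that $\mathcal{N}_\infty(f,\ell)=k$ a.s.\ for some fixed $k\in\{2,3,\ldots\}$. The plan is to construct a non-negative shift $h$ in the Cameron--Martin space $H$, with the regularity required by Lemma~\ref{lem:global_equiv}, such that on a positive-probability event $\{f+h\geq\ell\}$ has a single unbounded component. Combined with Lemma~\ref{lem:Cameron-Martin}, this will yield $\P[\mathcal{N}_\infty(f,\ell)=1]>0$, contradicting $\mathcal{N}_\infty(f,\ell)=k\geq 2$ a.s.

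Since $k$ is finite, the event $\mathcal{A}_R$ that all $k$ unbounded components of $\{f\geq\ell\}$ meet $B_R$ has probability tending to $1$ as $R\to\infty$. I will fix $R$ with $\P[\mathcal{A}_R]\geq 3/4$ and then $M>0$ with $\P[\sup_{B_R}|f|\leq M]\geq 3/4$, so that the ``good event'' $\mathcal{G}:=\mathcal{A}_R\cap\{\sup_{B_R}|f|\leq M\}$ has positive probability. For the shift, using $\kappa(0)>0$ (from assumption~\ref{a:2}) and continuity of $\kappa$, I will pick $r>0$ with $\kappa\geq\kappa(0)/2$ on $B_r$, cover $B_R$ by finitely many balls of radius $r/2$ centered at $y_1,\ldots,y_N$, and set
$$h(x)=c\sum_{i=1}^N\kappa(x-y_i)$$
with $c$ large enough that $h\geq M+|\ell|+1$ on $B_R$. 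Then $h\in H$ as a linear combination of translates of $\kappa$; $h\geq 0$ by~\ref{a:3}; $h\in C^2\cap W^{1,1}$ by~\ref{a:1} and~\ref{a:4}; and $h\in W^{2,\infty}$ because Cauchy--Schwarz applied to $\partial_i\partial_j\kappa(x)=\E[f(0)\partial_i\partial_j f(x)]$ combined with~\ref{a:1} bounds $\nabla\kappa$ and $\nabla^2\kappa$ uniformly.

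Lemma~\ref{lem:global_equiv} then gives GE$(f,\ell,h)$ a.s., so on $\mathcal{G}$ there is a (random) $R''\geq R$ for which $\{f\geq\ell\}\setminus B_{R''}$ and $\{f+h\geq\ell\}\setminus B_{R''}$ are percolation equivalent. On $\mathcal{G}$, the choice of $h$ and $M$ forces $B_R\subset\{f+h\geq\ell\}$. Every unbounded component of $\{f+h\geq\ell\}\setminus B_{R''}$ contains, via the inclusion-based bijection, an unbounded component of $\{f\geq\ell\}\setminus B_{R''}$, which in turn lies in one of the unbounded components $U_1,\ldots,U_k$ of $\{f\geq\ell\}$; since each $U_i$ meets $B_R$ on $\mathcal{A}_R$, it provides a path inside $\{f\geq\ell\}\subset\{f+h\geq\ell\}$ from the tail to $B_R$. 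Hence every unbounded component of $\{f+h\geq\ell\}$ is connected through $B_R$, giving $\mathcal{N}_\infty(f+h,\ell)=1$ on $\mathcal{G}$, and Lemma~\ref{lem:Cameron-Martin} finishes the argument.

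The most delicate point will be the bookkeeping in the merging step: leveraging GE to rule out a spurious unbounded component of $\{f+h\geq\ell\}$ that does not descend, via the bijection, from one of the $U_i$. The simultaneous four regularity conditions on $h$ plus the lower bound on $B_R$ are essentially forced: translates of $\kappa$ are the natural building blocks within $H$, and assumption~\ref{a:4} was tailored in Lemma~\ref{lem:global_equiv} precisely so that the globally supported shift acts only locally.
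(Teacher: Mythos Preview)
Your proof is correct and follows essentially the same approach as the paper: assume $\mathcal{N}_\infty=k\geq 2$ a.s., find $R$ so all $k$ unbounded components hit $B_R$ with positive probability, build $h\in H$ as a finite nonnegative combination of translates of $\kappa$ large enough to force $B_R\subset\{f+h\geq\ell\}$, invoke Lemma~\ref{lem:global_equiv} so the shift is ``local'', and then apply Lemma~\ref{lem:Cameron-Martin} to contradict $k\geq 2$. The only differences are cosmetic (you control $\sup_{B_R}|f|$ rather than $\inf_{B_R}f$, and you spell out more carefully than the paper why global equivalence prevents a spurious new unbounded component of $\{f+h\geq\ell\}$ from appearing).
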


\begin{proof}
	Assume by contradiction that $\mathcal{N}_\infty(f,\ell)=k_0$ almost surely with $k_0\in \{2,3,\cdots\}$. Let $\mathrm{Int}(f,\ell,R)$ be the event that all the $k_0$ unbounded components of $\{ f\geq\ell \}$ intersect $B_R$. Since $k_0$ is finite, for a sufficiently large radius $R>0$ one has
	$\P[\mathrm{Int}(f, \ell, R)]\geq 1/2$.
	We will now shift the field $f$ by an appropriate function $h\in H$ in such a way that $\{f+h\geq\ell\}$ has a unique unbounded component, thus obtaining a contradiction by Lemma~\ref{lem:Cameron-Martin}. First, there exists $M=M(R)>0$ large enough such that $\P[\inf_{x\in B_R} f(x)\geq -M]\geq3/4$, hence
	\begin{equation}\label{eq:quasi_uniq>0}
		\P[\mathrm{Int}(f,\ell,R)\cap\{\inf_{x\in B_R} f(x)\geq -M\}]\geq 1/4.
	\end{equation}
	Let $h_0\in H$ be be the function given by Lemma~\ref{lem:tame_shift}. Then there exist $c_0, r_0>0$ such that $h_0(y)\geq c_0$ for all $||y||_\infty\leq r_0$.
	Consider the function $h\in H$ given by
	\begin{equation}\label{eq:h}
		h(x)=\frac{M+\ell}{c_0}\sum_{z\in 2r_0\Z^d\cap B_{R+\sqrt{d}r_0}} h_0(x-z).
	\end{equation}
	One can easily check that by construction $h\geq0$, $h\in C^2\cap W^{2,\infty}\cap W^{1,1}$ and $h(x)\geq M+\ell$ for all $x\in B_R$.
	By construction and the definition of global equivalence we have $\mathrm{Int}(f,\ell,R)\cap\{\inf_{x\in B_R} f(x)\geq-M\}\cap \textrm{GE}(f,\ell,h)\subset \{\mathcal{N}_\infty(f+h,\ell)=1\}$, hence $\P[\mathcal{N}_\infty(f+h,\ell)=1]>0$ by \eqref{eq:quasi_uniq>0} and Lemma~\ref{lem:global_equiv}, and thus $\P[\mathcal{N}_\infty(f,\ell)=1]>0$ by Lemma~\ref{lem:Cameron-Martin}, which is a contradiction with $\P[\mathcal{N}_\infty(f,\ell)=k_0]=1$.
\end{proof}

\begin{proposition}\label{prop:no_infty}
	If $f$ satisfies assumptions \ref{a:1}, \ref{a:2} and \ref{a:3}, then for every $\ell\in\R$ one has
	$$\P[\mathcal{N}_\infty(f,\ell)=\infty]=0.$$
\end{proposition}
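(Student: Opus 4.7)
The plan is to proceed by contradiction and implement the Burton--Keane trifurcation argument in the continuum. Suppose $\P[\mathcal{N}_\infty(f,\ell)=\infty]=1$. Since infinitely many unbounded components exist almost surely, one can pick $R>0$ large enough so that the event $\mathrm{Int}_3(f,\ell,R)$ that $B_R$ meets at least three distinct unbounded components of $\{f\geq\ell\}$ has probability $\geq 1/2$. Paralleling \eqref{eq:quasi_uniq>0}, I then choose $M>0$ so that
$$\P\bigl[\mathrm{Int}_3(f,\ell,R)\cap\{\inf_{x\in B_R} f(x)\geq -M\}\bigr]\geq 1/4.$$

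Next, I shift by exactly the same function $h\in H$ built in \eqref{eq:h}, which is non-negative, lies in $C^2\cap W^{2,\infty}\cap W^{1,1}$, and satisfies $h\geq M+\ell$ on $B_R$. On the good event above intersected with $\textrm{GE}(f,\ell,h)$, the ball $B_R$ is contained in $\{f+h\geq\ell\}$, so the (at least) three unbounded components of $\{f\geq\ell\}$ that touch $B_R$ become merged into one in $\{f+h\geq\ell\}$; however, by global equivalence there exists a (possibly larger) radius $R'>R$ such that $\{f+h\geq\ell\}\setminus B_{R'}$ is percolation-equivalent to $\{f\geq\ell\}\setminus B_{R'}$, and hence still contains at least three distinct unbounded components. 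In particular, $B_{R'}$ is a \emph{trifurcation ball} for $f+h$ (meaning it is contained in an unbounded component of $\{f+h\geq\ell\}$ whose removal leaves at least three distinct unbounded components, each meeting $\partial B_{R'}$) with probability $\geq 1/4-o(1)>0$. By Lemma~\ref{lem:Cameron-Martin}, $B_{R'}$ is a trifurcation ball for $f$ itself with some probability $p>0$, and by stationarity so is $B_{R'}(x)$ for every $x\in\R^d$.

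Finally, I invoke the classical Burton--Keane combinatorial counting. Fix $L\gg R'$ and consider the well-separated grid $\Lambda_L\coloneqq (3R'\Z^d)\cap B_L$, of cardinality $\sim (L/R')^d$. The expected number of $x\in\Lambda_L$ such that $B_{R'}(x)$ is a trifurcation ball is $\geq p|\Lambda_L|\geq cL^d$. On the other hand, at each trifurcation ball one selects three disjoint unbounded paths of $\{f\geq\ell\}$ exiting through $\partial B_{R'}(x)$ into distinct unbounded sub-components; following these paths out to $\partial B_{L+R'}$ gives three distinct boundary components of $\{f\geq\ell\}\cap\partial B_{L+R'}$, and organising the trifurcation balls together with their branches into a forest (with trifurcation balls as internal nodes of degree $\geq 3$ and boundary components as leaves) yields the combinatorial bound $\#\{\text{trifurcation balls in }B_L\}\leq \mathcal{N}_{\partial B_{L+R'}}(f,\ell)$. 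Taking expectations and applying Lemma~\ref{lem:number_comp} gives $cL^d\leq C_0(L+R')^{d-1}$ for all $L$, a contradiction. The main obstacle is the execution of this final combinatorial step in the continuum: one must carefully select, at each trifurcation ball, three disjoint infinite paths whose initial arcs avoid the other trifurcation balls on the grid, so as to obtain a genuine forest structure relating trifurcation balls to boundary components. This is the standard technical cost of transferring Burton--Keane from the lattice to the Euclidean setting, and the $3R'$-spacing of the grid, together with the disjoint branches' ability to be chosen inside the unique unbounded component granted by Proposition~\ref{prop:no_finite>1}, makes the argument go through.
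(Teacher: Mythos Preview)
Your argument follows the paper's approach essentially step for step: establish positive probability of a trifurcation for the shifted field via $\mathrm{Int}_3$, global equivalence and Cameron--Martin, then run the Burton--Keane count against Lemma~\ref{lem:number_comp}. Two points need tightening.

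First, the radius $R'$ furnished by $\textrm{GE}(f,\ell,h)$ is \emph{random}, so the sentence ``$B_{R'}$ is a trifurcation ball for $f+h$ with probability $\geq 1/4-o(1)$'' does not make sense for a fixed $R'$. What you actually obtain is $\P\bigl[\bigcup_{R'>0}\mathrm{Trif}_0(f+h,\ell,R')\bigr]\geq 1/4$; since the events $\mathrm{Trif}_0(f+h,\ell,R')$ are increasing in $R'$, you can then extract a single deterministic $R'$ with positive probability, exactly as the paper does before applying Lemma~\ref{lem:Cameron-Martin}. (Relatedly, your definition of ``trifurcation ball'' asks that $B_{R'}$ itself be \emph{contained} in an unbounded component, which you have not established for $R'>R$; the paper's weaker notion, that the origin lies in an unbounded component splitting into three upon removal of $B_{R'}$, is what actually follows and is all that is needed.)

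Second, your closing appeal to Proposition~\ref{prop:no_finite>1} for ``the unique unbounded component'' is misplaced: under the contradiction hypothesis $\mathcal{N}_\infty=\infty$ there is no uniqueness, and Proposition~\ref{prop:no_finite>1} says nothing in this regime. Fortunately the Burton--Keane forest argument does not require the trifurcation balls to lie in a common unbounded component (the forest simply has several trees), so this appeal should be dropped rather than repaired.
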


\begin{proof}
	Given a random field $g$ and $K\geq R>0$, we say that a point $x\in \R^d$ is an \emph{$(R,K)$-coarse trifurcation} for $\{g\geq\ell\}$ if the following happens: denoting by $\mathcal{C}_x$ the component of $x$ in $\{g\geq\ell\}$ and by $(\mathcal{C}_x^i)_{1\leq i\leq k(x)}$ the distinct components of $\mathcal{C}_x\setminus B_R(x)$, then $(\mathcal{C}_x^i)_{1\leq i\leq k(x)}$ are all connected to $x$ in $\mathcal{C}_x\cap B_K(x)$ and at least three of them are unbounded. We denote this event by $\mathrm{Trif}_x(g,\ell,R,K)$. Assume by contradiction that $\P[\mathcal{N}_\infty(\ell)=\infty]=1$. By proceeding as in the proof of Proposition~\ref{prop:no_finite>1}, we can find $r,M>0$ such that
	\begin{equation}\label{eq:quasi_trif>0}
		\P[\mathrm{Int}_3(f,\ell,r)\cap\{\inf_{x\in B_r} f(x)\geq-M\}]>0,
	\end{equation}
	where $\mathrm{Int}_3(f,\ell,r)$ denotes the event that at least three distinct unbounded components of $\{f\geq\ell\}$ intersect $B_r$. Furthermore, we can construct a function $h\in H$ (exactly as in the proof of Proposition~\ref{prop:no_finite>1}) such that $h\geq0$, $h\in C^2\cap W^{2,\infty}\cap W^{1,1}$ and $h(x)\geq M+\ell$ for all $x\in B_r$.
	By construction and the definition of global equivalence we have $\mathrm{Int}_3(f,\ell,r)\cap\{\inf_{x\in B_r} f(x)\geq -M\}\cap \textrm{GE}(f,\ell,h)\subset \cup_{R,K} \textrm{Trif}_0(f+h,\ell,R,K)$, thus $\P[\cup_{R,K}\textrm{Trif}_0(f+h,\ell,R,K)]>0$ by \eqref{eq:quasi_trif>0} and Lemma~\ref{lem:global_equiv}.
	In particular, there exists $K\geq R>0$ such that $\P[\textrm{Trif}_0(f+h,\ell,R,K)]>0$, and by Lemma~\ref{lem:Cameron-Martin}, 
	\begin{equation}\label{eq:trif>0}
		\P[\textrm{Trif}_0(f,\ell,R,K)]>0.
	\end{equation}
	We fix $R,K$ as above and consider $L\gg K$. Let $\mathcal{S}_L$ be the set of components of $\{f\geq\ell\}\cap \partial B_L$ and $\mathcal{X}_L\coloneqq \{x\in 4K\Z^d\cap B_{L-2K} :~ \mathrm{Trif}_x(f,\ell,R,K) \text{ happens}\}$ be the set of coarse trifurcations. Let $\mathcal{T}_L\coloneqq |\mathcal{X}_L|$ and recall that $\mathcal{N}_{\partial B_L}=|\mathcal{S}_L|$. One can prove that the following inequality holds deterministically
	\begin{equation}\label{eq:trif<boundary}
	\mathcal{T}_L\leq \max\{0,\mathcal{N}_{\partial B_L}-2\}\leq \mathcal{N}_{\partial B_L}.
	\end{equation}
	Before justifying \eqref{eq:trif<boundary}, let us conclude the proof. 
	On the one hand, by Lemma~\ref{lem:number_comp} we have $\E[\mathcal{N}_{\partial B_L}]\leq C_0L^{d-1}$.
	On the other hand, by \eqref{eq:trif>0} and stationarity we have $\E[\mathcal{T}_L]=|4K\Z^d\cap B_{L-2K}|\P[\mathrm{Trif}_0(f,\ell,R,K)]$ $\geq c(K)L^d$. Together with \eqref{eq:trif<boundary}, this gives a contradiction for $L$ large enough, thus concluding the proof.
	
	We now give a proof of \eqref{eq:trif<boundary}, which is close in spirit to that in \cite{BurKea89} -- see also \cite{AizDumSid15}  for an alternative proof. Let $x\in \mathcal{X}_L$ and assume without loss of generality that $\mathcal{C}_x^i$ is unbounded if and only if $i\leq k'(x)$, where $3\leq k'(x)\leq k(x)$. Now, for each $i\leq k'(x)$, let $P_x^i\subset \mathcal{S}_L$ be the (non-empty) set of boundary components belonging to $\mathcal{C}_x^i$. Notice that $\mathcal{P}_x\coloneqq \{P_x^1,\cdots,P_x^{k'(x)}\}$ is a sub-partition of $\mathcal{S}_L$, i.e.~$P_x^i\cap P_x^j=\emptyset$ for all $i\neq j$. Furthermore, for every $x,y\in \mathcal{X}_L$, $x\neq y$, one of the following happens:
	\begin{itemize}
		\item $\mathcal{P}_x$ and $\mathcal{P}_y$ are \emph{unrelated}, i.e.~one has $P_x^i\cap P_y^j=\emptyset$ for all $i\leq k'(x), j\leq k'(y)$. Indeed, this happens if $x$ and $y$ belong to distinct unbounded components.
		\item $\mathcal{P}_x$ and $\mathcal{P}_y$ are \emph{compatible}, i.e.~there exist $i\leq k'(x)$ and $i\leq k'(y)$ such that $\bigcup_{l\neq j} P_y^l \subset P_x^i$ and $\bigcup_{l\neq i} P_x^l \subset P_y^j$. Indeed, first notice that if $x$ and $y$ belong to the same unbounded components, then there exist $i\leq k'(x)$ and $j\leq k'(y)$ such that $y\in \mathcal{C}_x^i$ and $x\in \mathcal{C}_y^j$. By the definition of coarse trifurcation, one concludes that $\bigcup_{l\neq j} \mathcal{C}_y^l \subset \mathcal{C}_x^i$ and $\bigcup_{l\neq i} \mathcal{C}_x^l \subset \mathcal{C}_y^j$, and the compatibility of $\mathcal{P}_x$ and $\mathcal{P}_y$ follows.
	\end{itemize}
	The desired inequality \eqref{eq:trif<boundary} follows directly from the combinatorial lemma below.
	\begin{lemma}\label{lem:comp_part}
		Let $(\mathcal{P}_t)_{t\in T}$ be a family of sub-partition of a finite set $S$ with $|\mathcal{P}_t|\geq3$ for all $t\in T$. Assume that for every $t\neq s$, the sub-partitions $\mathcal{P}_t$ and $\mathcal{P}_s$ are either unrelated or compatible. Then $|T|\leq |S|-2$. 
	\end{lemma}
	A simple proof (by induction in $|T|$) of Lemma~\ref{lem:comp_part} can be found in \cite{BurKea89}. In fact, the original lemma from \cite{BurKea89} deals with full partitions (therefore never unrelated) of cardinality exactly $3$, but the proof applies readily to the context of Lemma~\ref{lem:comp_part} above.
	\end{proof}

We conclude by deducing Corollary~\ref{cor:uniq_nodal} from Theorem~\ref{thm:uniq}.

\begin{proof}[Proof of Corollary~\ref{cor:uniq_nodal}]
	Assume by contradiction that $\{f=\ell\}$ has two distinct unbounded components $A$ and $B$. First notice that by the smoothness assumption, $f$ is  $C^3$-smooth almost surely -- see e.g.~Sections A.3 and A.9 of \cite{NS16}. Also, by the non-degeneracy assumption, $f$ contains no critical point in $\{f=\ell\}$ almost surely, and therefore $A$ and $B$ are both closed, orientable and $C^1$-smooth hyper-surfaces. Hence by the Jordan--Brouwer separation theorem \cite{Lim88}, we have that $\R^n\setminus A = U \cup V$ with $U$ and $V$ two connected open subsets of $\R^n$ with boundary equal to $A$. Assume without loss of generality that $B\subset U$. Since the gradient of $f$ does not vanish on $A$ by non-degeneracy, $f-\ell$ changes its sign at $A$ and therefore there exists a neighborhood $U'$ of $A$  in $U$\footnote{i.e.~a set $U'\subset U$ such that for every $x\in A$ there exists an open set $O_x\subset \R^d$ containing $x$ such that $U\cap O_x\subset U'$} such that either $f<\ell$ or $f>\ell$ in $U'$. In the first case, the (unbounded) components of $A$ and $B$ in $\{f\geq \ell\}$ are separated by $U'$ and therefore disjoints, contradicting Theorem~\ref{thm:uniq}. In the second case, the (unbounded) components of $A$ and $B$ in $\{f\leq \ell\}$ are separated by $U'$ and therefore disjoints. Since $f$ is centered, $\{f\leq\ell\}$ has the same distribution as $\{f\geq-\ell\}$, and we obtain again a contradiction with Theorem~\ref{thm:uniq}.
\end{proof}

\paragraph{Acknowledgements.} I would like to thank Damien Gayet for helping with the topological argument in the proof of Corollary~\ref{cor:uniq_nodal} from Theorem~\ref{thm:uniq}. I would also like to thank Stephen Muirhead for providing references for the proofs of Lemmas~\ref{lem:number_comp} and \ref{lem:global_equiv}, as well as for pointing out that our assumptions are satisfied by fields with positive spectral density around the origin. Finally, I am grateful to the anonymous referees for their valuable comments on a previous version of this article. 
This project has received funding from the European Research Council (ERC) under the European Union’s Horizon 2020 research and innovation program (grant agreement No 851565).

\bibliographystyle{alpha}

\end{document}